%
%
%
%
\documentclass{amsart}

\newtheorem{theorem}{Theorem}[section]

\newtheorem{proposition}[theorem]{Proposition}
\theoremstyle{definition}

\newtheorem{corollary}[theorem]{Corollary}
\theoremstyle{remark}
\newtheorem{remark}[theorem]{Remark}

\numberwithin{equation}{section}

\newcommand{\abs}[1]{\lvert#1\rvert}


\begin{document}

\title{Linear characters and Block algebra}

\author{Jiwen Zeng}
\curraddr{ Mathematic School, Xiamen University, Fujian, China.}
\email{jwzeng@xmu.edu.cn}
\thanks{}


\subjclass[2000]{Primary 20C20}

\date{}

\dedicatory{School of Mathematics, Xiamen University\\jwzeng@xmu.edu.cn}

\keywords{Characters, block algebra}

\begin{abstract}
This paper will prove that: 1. $G$ has a block only having linear ordinary characters if and only if $G$ is a $p$-nilpotent group with an abelian Sylow $p$-subgroup; 2. $G$ has a block only having linear Brauer characters if and only if $O_{p'}(G)\leq O_{p'p}(G)=HO_{p'}(G)= \textrm{Ker}(B_{0}^{*}) \leq O_{p'pp'}=G$, where $H=G^{'}O^{p'}(G), \textrm{Ker}(B_{0}^{*})=\bigcap_{\lambda \in \textrm{IBr}(B_{0})} \textrm{Ker}(V_{\lambda}), B_{0}$ is the principal block of $G$ and $V_{\lambda}$ is the $F[G]$-module affording the Brauer character $\lambda$; 3. if $G$ satisfies the conditions above, then for any block algebra $B$ of $G$, we have $$ \frac{\textrm{Dim}_{F}(B)}{|D|}= \sum_{\varphi \in \textrm{IBr}(B)}\varphi(1)^{2}$$ where $D$ is the defect group of $B$.
\end{abstract}

\maketitle
\section*{Introduction}

Let $G$ be a finite group. Let Irr($G$) and IBr($G$) denote the set of irreducible ordinary characters and irreducible Brauer characters of $G$, respectively. We use cd($G$) to denote the set $\{ \chi(1)|\chi \in \textrm{Irr}(G)\}$. The classical result asserts that $G$ has a normal abelian $p$-complement if and only if cd($G$) is a set of $p$-power. J. Thompson\cite{tho} proved that $G$ has a normal $p$-complement if $p$ divides every element $\chi(1)>1$ in cd($G$). G. Navarro and P. H. Tiep \cite{nati}improved this result by considering the fields of character values. Let $B$ be a $p$-block algebra of $G$ and cd$(B)=\{ \chi(1)| \chi \in \textrm{Irr}(B)\}$. M. Isaacs and S.D.Smith\cite{issm} proved that $G$ is $p$-nilpotent  if $p$ divides every element($>1$) in cd($B_{0}$), where $B_{0}$ is the principal block of $G$. G. Navarro and G. Robinson\cite{naro} proved that $B$ is a nilpotent block if  cd($B$) is a set of $p$-power, using the Classification of Finite Simple Groups.

Now let  bcd$(G)=\{ \varphi(1)| \varphi \in \textrm{IBr}(G)\}$.  It is easy to prove that $G$ has a normal Sylow $p$-subgroup $P$ and $G/P$ is abelian if and only if bcd$(G)=\{ 1 \}$. A theorem of Michler\cite{mic} asserts $G$ has a normal Sylow $p-$subgroup if and if only there is no element in bcd($G$) divided by $p$. P. H. Tiep and W. Willems\cite{tiwi} considered another extreme case when  bcd($G$) is a set of a power of a fixed prime number $s$. They proved $G$ is solvable unless $p=s=2$.

For a $p$-block $B$ of $G$, we define bcd($B)=\{  \varphi(1)| \varphi \in \textrm{IBr}(B)\}$. Generally, it is very difficult to get the information of $G$ by using the set bcd$(B)$ or cd$(B)$(see the articles mentioned above). In this paper, we want to decide the group $G$ under the condition of existing some bcd$(B)=\{1 \}$ or cd$(B)=\{1 \}$. We succeed in finding all these groups satisfying our condition( see Theorem 2.4). One typical case is $S_{4}, p=3$. Our ideal mainly comes from two papers, one from G. Chen\cite{chen} and another from X.Y. Chen, etc.\cite{xiao}. G. Chen proved a result on zeros of ordinary characters, and X. Y. Chen, etc. generalized the result to Brauer characters. In this paper, we will improve their results(see Corollary 1.4).

T. Holm and W. Willems\cite{hw} presented some conjectures on Brauer character degrees. One of their conjectures is in block form: let $B$ be a block of $G$ with defect group $D$, is it true the following:
$$\frac{\textrm{Dim}_{F}(B)}{l(B)|D|}\leq \sum_{\varphi \in \textrm{IBr}(B)}\varphi(1)^{2}\;?$$with the equality if and only if $l(B)=1$, where $l(B)=|\textrm{IBr}(B)|$. They have proved the conjecture holds for $p$-solvable groups. Our results show that the groups in our cases are $p$-solvable and furthermore we will get:
$$\frac{\textrm{Dim}_{F}(B)}{|D|}= \sum_{\varphi \in \textrm{IBr}(B)}\varphi(1)^{2}$$ for any $B\in \textrm{Bl}(G)$, if $G$ satisfies our condition.

For most results in Character theory and Brauer character theory,  refer to \cite{is}\cite{nav}\cite{be}\cite{na}.

Section 1 will define an action of linear Brauer characters on IBr($G$) and set of projective characters, giving results on zero points of Brauer characters. Section 2 will define an action of linear Brauer characters on the set of block algebras of $G$, giving our main results in this paper.

The following notation and terminology will be used throughout in this paper. $G$ is a finite group. $p>0$ is a prime number. $(K, R, F)$ is a splitting $p$-module system, that means $R$ is a complete discrete valuation ring with a maximal ideal $\pi$ such that $F=R/\pi$ is a field of characteristic $p$ and $K$ is the quotient  field of $R$. A $p$-regular element means an element of $G$ whose order
is prime to $p$. $G^{0}$ is the set of all $p$-regular elements in $G$. Bl($G$) denotes the set of $p$-blocks of $G$. $B_{0}$ denotes the principal block of $G$, containing the trivial character. We use LBr($G$) to denote the set of all linear Brauer characters of $G$.

\section{Action of ${\rm LBr}(G)$ on ${\rm IBr}(G)$}

Let $G$ be a finite group and let IBr(G) be the set of irreducible Brauer characters of $G$.  For a $\varphi \in  \textrm{IBr}(G)$, We use $V_{\varphi}$ to denote the $F[G]$-module
 affording  the
Brauer character $\varphi$. Let
LBr($G$) denote the set of all linear Brauer characters of $G$. Take $\lambda \in \textrm{LBr}(G)$ and
$\varphi \in \textrm{IBr}(G)$, then $\lambda\varphi$ is a Brauer character afforded
by $V_{\lambda}\otimes V_{\varphi}$, which is also irreducible $F[G]$-module. Hence
$\lambda\varphi \in \textrm{IBr}(G)$. If $V$ is a $F[G]$(right)-module, we define $$
V^{*}=\textrm{Hom}_{F}(V,F)
$$
as $F[G]$-module by $fx(u)=f(ux^{-1}),x\in G,u\in V$, for $f\in \textrm{Hom}_{F}(V,F)$. $V^{*}$ affords
Brauer character $\varphi^{*}$ such that  $\varphi^{*}(x)=\varphi(x^{-1})$, if $V$ affords character $\varphi$.

It follows from discussion above that LBr($G$) is an abelian group and has an action on the set IBr($G$).

Let $P_{\varphi}$ be the projective cover of $\varphi \in \textrm{IBr}(G)$. Since $\lambda\varphi\in \textrm{IBr}(G)$ for $\lambda \in \textrm{LBr}(G)$, what can we say about  $P_{\lambda\varphi}$?
\begin{proposition} For $\lambda \in {\rm LBr}(G),\varphi \in {\rm IBr}(G)$, let $V_{\varphi}$ and $P_{\varphi}$ be the corresponding simple module and projective cover, respectively. Then we have
$P_{\lambda\varphi}=V_{\lambda}\otimes P_{\varphi}.$

\end{proposition}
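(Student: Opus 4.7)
The plan is to exploit the fact that tensoring with a one–dimensional module $V_\lambda$ is an auto-equivalence of the category of finitely generated $F[G]$-modules (with quasi-inverse $V_{\lambda^{-1}} \otimes - = V_\lambda^* \otimes -$). Once that is set up, both the projectivity and the identification of the head follow formally, and the conclusion is immediate from the uniqueness of projective covers.

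First I would verify that $V_\lambda \otimes -$ preserves projective modules. The quickest route is to produce an explicit isomorphism $V_\lambda \otimes F[G] \cong F[G]$ of left $F[G]$-modules: if $v$ is a basis of $V_\lambda$ and $\bar\lambda\colon G\to F^{\times}$ is the linear representation, then $v\otimes h \mapsto \bar\lambda(h)^{-1} h$ is $F[G]$-linear, as a direct check on the action $g\cdot(v\otimes h)=\bar\lambda(g)\,v\otimes gh$ shows. Extending additively, $V_\lambda \otimes F[G]^n \cong F[G]^n$, and since every projective is a direct summand of a free module, $V_\lambda \otimes P_\varphi$ is projective.

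Next I would identify the head. Because $V_\lambda$ is one-dimensional, the functor $V_\lambda \otimes -$ is exact and its inverse $V_{\lambda^{-1}}\otimes -$ is also exact, so the functor sends simple modules to simple modules and respects the radical filtration; concretely, $V_\lambda \otimes J(M) = J(V_\lambda \otimes M)$ for every $F[G]$-module $M$. Applying this to $M=P_\varphi$ and using $P_\varphi/J(P_\varphi)\cong V_\varphi$, one gets
\[
\operatorname{head}(V_\lambda \otimes P_\varphi) \;=\; V_\lambda \otimes \operatorname{head}(P_\varphi) \;=\; V_\lambda \otimes V_\varphi \;=\; V_{\lambda\varphi},
\]
which is simple by the discussion preceding the proposition.

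Finally, a projective $F[G]$-module with simple head is indecomposable and is the projective cover of its head. Therefore $V_\lambda \otimes P_\varphi \cong P_{\lambda\varphi}$. The only genuinely non-formal step is the first paragraph, establishing that $V_\lambda \otimes F[G]\cong F[G]$ (equivalently, that twisting by a linear character preserves projectivity); once that is done, the rest is just bookkeeping through the exact functor $V_\lambda\otimes -$.
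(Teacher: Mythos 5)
Your proof is correct and follows essentially the same route as the paper: tensor the projective cover of $V_\varphi$ with the one-dimensional module $V_\lambda$, observe that the result is projective with simple head $V_{\lambda\varphi}$, and invoke uniqueness of projective covers. The only difference is that you supply the justification (via $V_\lambda\otimes F[G]\cong F[G]$ and the exactness of the twist functor) for the projectivity and indecomposability that the paper simply asserts.
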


\begin{proof}
We have $$ P_{\varphi}\longrightarrow V_{\varphi}\longrightarrow 0.
$$
Then $$ V_{\lambda}\otimes P_{\varphi} \longrightarrow V_{\lambda}\otimes V_{\varphi}\longrightarrow 0.
$$
Since $V_{\lambda}\otimes P_{\varphi}$ is projective and indecomposable, it follows that $P_{\lambda\varphi}=V_{\lambda}\otimes P_{\varphi}$ and the proof is complete.
\end{proof}
\begin{remark}Let $\Phi_{\varphi}$ denote the Brauer character afforded by $P_{\varphi}$ for $\varphi \in \textrm{IBr}(G)$. Let $\textrm{PIm}(G)=\{ \Phi_{\varphi}|\varphi \in \textrm{IBr}(G)\}$. By Proposition 1.1,
the group $\textrm{LBr}(G)$ has an action on $\textrm{PIm}(G)$ by $\lambda \Phi_{\varphi}=\Phi_{\lambda\varphi}$.

\end{remark}

Let $\{\Delta_{i}|i=1,2,...,n\}$ be the set of orbits of LBr($G$) acting on IBr($G$). Hence
$$   \textrm{IBr}(G)=\bigcup_{i=1,...,n}\Delta_{i}.$$

Take a sum $$
\sigma=\sum_{\varphi\in \Delta}\varphi,$$
where $\Delta$ is an orbit of LBr($G$) acting on IBr($G$). Then we have $\lambda \sigma=\sigma$,
for all $\lambda\in \textrm{LBr}(G)$. If $g\in G_{p^{\prime}}$ is a $p$-regular element and $\sigma(g)\neq 0$,
 then $$g\in \bigcap_{\lambda \in \textrm{LBr}(G)}\textrm{Ker}(\lambda)=G^{\prime}O^{p^{\prime}}(G),$$
 where $O^{p^{\prime}}(G)$ is a subgroup of $G$ generated by all $p$-elements.

 Notation: $G^{0}$ is the set of $p$-regular elements of $G$. $\triangle_{\varphi}$ is the orbit
 containing $\varphi$ under the action of LBr($G$) on IBr($G$). $\Phi_{\varphi}$ denotes the Brauer character
 of projective cover of $\varphi.$

 By using the discussions above it can be shown the following result:

 \begin{theorem}
 Let $\varphi\in {\rm IBr}(G)$ and $x\in G^{0}-H^{0}$, where $H=G^{\prime}O^{p^{\prime}}(G)$. Then the order $o(xH)||\triangle_{\varphi}|$ or both $\varphi (x)=0$ and $\Phi_{\varphi}(x)=0$.
 \end{theorem}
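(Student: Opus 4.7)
The plan is to prove the contrapositive: if either $\varphi(x)\ne 0$ or $\Phi_{\varphi}(x)\ne 0$, then $o(xH)$ divides $|\Delta_{\varphi}|$. Let $T=\{\lambda\in\mathrm{LBr}(G)\mid\lambda\varphi=\varphi\}$ denote the stabilizer of $\varphi$ under the action of $\mathrm{LBr}(G)$ on $\mathrm{IBr}(G)$, so that $|\Delta_{\varphi}|=|\mathrm{LBr}(G)|/|T|$ by the orbit–stabilizer theorem. Because every linear Brauer character has $H=G'O^{p'}(G)$ in its kernel and every character of the abelian $p'$-group $G/H$ lifts to $\mathrm{LBr}(G)$, we may identify $\mathrm{LBr}(G)$ with the dual group $\widehat{G/H}$.

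The first key step is to translate the stabilizer condition on values. For any $\lambda\in T$ and any $y\in G^{0}$, the equality $\lambda\varphi=\varphi$ gives $\lambda(y)\varphi(y)=\varphi(y)$; moreover, by Proposition~1.1, $P_{\lambda\varphi}=V_{\lambda}\otimes P_{\varphi}$, so $\lambda(y)\Phi_{\varphi}(y)=\Phi_{\varphi}(y)$. Thus, if $\varphi(x)\ne 0$ or $\Phi_{\varphi}(x)\ne 0$, then $\lambda(x)=1$ for every $\lambda\in T$, which means $xH$ lies in the annihilator
$$K/H=\{gH\in G/H\mid\lambda(g)=1\text{ for all }\lambda\in T\}.$$

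The second key step is a Pontryagin duality count. Since $G/H$ is a finite abelian group and $T\le\widehat{G/H}$, the annihilator $K/H$ satisfies $|K/H|=|\widehat{G/H}|/|T|=|\mathrm{LBr}(G)|/|T|=|\Delta_{\varphi}|$. Once $xH\in K/H$, Lagrange gives $o(xH)\bigm|\,|K/H|=|\Delta_{\varphi}|$, which is exactly the conclusion in the first alternative of the theorem. Putting the two steps together completes the proof.

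I expect the only mild subtlety is the identification $\mathrm{LBr}(G)\cong\widehat{G/H}$ with $H=G'O^{p'}(G)$ and the duality computation $|K/H|=|\mathrm{LBr}(G)|/|T|$; everything else is a direct exploitation of Proposition~1.1 and the tensor-product behaviour of Brauer characters on $p$-regular classes. No deeper block-theoretic machinery is needed for this statement.
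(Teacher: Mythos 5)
Your proof is correct, but it takes a genuinely different route from the paper's. The paper forms the product $\rho=\prod_{i}(\lambda_{i}\varphi)$ over the whole orbit and observes that $\lambda^{t}\rho=\rho$ for every $\lambda\in\mathrm{LBr}(G)$, where $t=|\triangle_{\varphi}|$; if $\rho(x)\neq 0$ (equivalently $\varphi(x)\neq 0$, since linear Brauer characters never vanish) this forces $\lambda(x^{t})=1$ for all $\lambda$, hence $x^{t}\in\bigcap_{\lambda}\mathrm{Ker}(\lambda)=H$ and $o(xH)\mid t$; the same computation with $\Phi_{\rho}=\prod_{i}\lambda_{i}\Phi_{\varphi}$ handles the projective character. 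You instead work with the stabilizer $T=T_{\varphi}$: nonvanishing of $\varphi$ or $\Phi_{\varphi}$ at $x$ forces $\lambda(x)=1$ for all $\lambda\in T$, so $xH$ lies in the annihilator $T^{\perp}\leq G/H$, whose order is $|\mathrm{LBr}(G)|/|T|=|\triangle_{\varphi}|$ by duality for the finite abelian $p'$-group $G/H$, and Lagrange finishes. Both arguments rest on the same two inputs, namely the identification of $\mathrm{LBr}(G)$ with $\widehat{G/H}$ (valid here because $(K,R,F)$ is a splitting system, and already used implicitly by the paper) and Proposition 1.1 for the projective cover. The paper's version shows $xH$ lies in the $t$-torsion of $G/H$, while yours places $xH$ inside a specific subgroup $T^{\perp}$ of order exactly $|\triangle_{\varphi}|$, which is marginally sharper and avoids the auxiliary product $\rho$; the paper's version avoids invoking the annihilator order formula. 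I see no gap in your argument.
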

 \begin{proof} Let $T_{\varphi}$ be the subgroup of LBr($G$) that fixs $\varphi$. Then $\triangle_{\varphi}=\{\lambda_{i}\varphi|\lambda_{i}\in \textrm{LBr}(G)/T_{\varphi}\}$. Let
 $\rho=\prod\lambda_{i}\varphi$ and we denote $|\triangle_{\varphi}|$ by $t$. Hence we have
 $\lambda^{t}\rho=\rho$ for any $\lambda\in \textrm{LBr}(G)$. If $\rho(x)\neq 0$ then $\lambda(x^{t})=1$.
 Hence $x^{t}\in H$ and $o(xH)|t$. If $\rho(x)=0$ then we get $\varphi(x)=0$.

 By Proposition 1.1, $\rho$ has the projective cover with Brauer character $\Phi_{\rho}=\prod\lambda_{i}\Phi_{\varphi}$. Hence $\lambda^{t}\Phi_{\rho}=\Phi_{\rho}.$ It follows
 from the same reason as above that $o(xH)||\triangle_{\varphi}|$ or $\Phi_{\varphi}(x)=0$.

 \end{proof}

 Let $l$ denote the number of irreducible Brauer characters. Since $l=\sum|\triangle_{\varphi}|$ this implies
 the existence of orbit $\triangle_{\varphi}$ such that $o(xH)$ does not divide $|\triangle_{\varphi}|$, if $(l,o(xH))=1.$
 Hence we have the following result as a generalization of \cite{xiao}.

 \begin{corollary} For $x\in G^{0}-H^{0}$. Then we have
 \begin{enumerate}

 \item If $o(xH)$ does not divide $l$, then there exists $\varphi\in {\rm IBr}(G)$ such that $\varphi(x)=0$ and $\Phi_{\varphi}(x)=0$.

\item If  $(o(xH),l)=1$, then there exists $\varphi\in {\rm IBr}(G)$
 such that $\varphi(x)=0$ and $\Phi_{\varphi}(x)=0$.
 \end{enumerate}
 \end{corollary}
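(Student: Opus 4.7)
The plan is to deduce the corollary directly from Theorem 1.3 via the orbit decomposition $\textrm{IBr}(G) = \bigcup_{i=1}^{n} \Delta_i$ of $\textrm{LBr}(G)$ acting on $\textrm{IBr}(G)$, which supplies the counting identity $l = \sum_{i=1}^{n} |\Delta_i|$. This identity is the only bridge needed to combine the hypotheses on $l$ with the divisibility dichotomy that Theorem 1.3 already provides separately for each orbit.

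For part (1) I would argue by contrapositive on the orbit sizes. If $o(xH)$ divided $|\Delta_i|$ for every $i$, it would divide their sum $l$, contradicting the assumption. Hence at least one orbit $\Delta_\varphi$ escapes this divisibility, and Theorem 1.3 applied to any $\varphi$ in that orbit immediately yields the conclusion $\varphi(x) = 0$ and $\Phi_\varphi(x) = 0$.

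Part (2) is essentially a reformulation of part (1). Since $x \in G^0 - H^0$, the coset $xH$ is nontrivial in $G/H$, forcing $o(xH) \geq 2$, so the coprimality condition $(o(xH), l) = 1$ automatically upgrades to $o(xH) \nmid l$, and (1) applies.

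I do not anticipate any real obstacle: the substantive content has been absorbed into Theorem 1.3, and what remains is a one-line counting consequence. The only point deserving care is the elementary observation in part (2) that $x \in G^0 - H^0$ rules out $o(xH) = 1$, so that coprimality with $l$ genuinely forces non-divisibility rather than being vacuous.
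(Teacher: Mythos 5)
Your proposal is correct and matches the paper's own argument: the paper likewise invokes the identity $l=\sum|\triangle_{\varphi}|$ to produce an orbit whose size is not divisible by $o(xH)$ and then applies Theorem 1.3. Your explicit remark that $x\notin H$ forces $o(xH)>1$, so that coprimality in part (2) genuinely implies non-divisibility, is a point the paper leaves implicit but is exactly the intended reasoning.
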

 \begin{proof} This is an immediate consequence of Theorem 1.3.
\end{proof}

\section{Action of ${\rm LBr}(G)$ on ${\rm Bl}(G)$}

Bl($G$) denote the set of block algebra of $F[G]$.

For $B\in \textrm{Bl}(G)$ and $\lambda \in \textrm{LBr}(G)$, we define
$\textrm{IBr}(\lambda B)=\{ \lambda\varphi |\varphi \in \textrm{IBr}(B)\}$. Choose any two elements
 $\phi,\varphi\in \textrm{IBr}(B)$. Then
 \begin{equation}
 (\Phi_{\lambda\varphi},\Phi_{\lambda\phi} )=(\Phi_{\varphi},\Phi_{\phi})
 \end{equation}
 and
 \begin{equation}(\lambda\varphi,\lambda\phi)_{G^{0}}=(\varphi,\phi)_{G^{0}}
 \end{equation}
 by which we have that $\lambda\phi\in \textrm{IBr}(\lambda B)$ if and only if
 $\phi \in \textrm{IBr}(B)$. Hence $\lambda B \in \textrm{Bl}(G)$. This defines an action
 of LBr($G$) on Bl($G$). By (2.1) and (2.2), the action keeps inner product of characters. Hence we have

 \begin{proposition}Let $\lambda\in {\rm LBr}(G)$ and $B\in {\rm Bl}(G)$. Then there is a inner-product-preserving bijection from $B$ to $\lambda B$ under the map: $\varphi \mapsto \lambda\varphi, \Phi_{\varphi}\mapsto \lambda \Phi_{\varphi}=\Phi_{\lambda\varphi}$.
In particular, $B$ and $\lambda B$ has the same Cartan matrix.

 \end{proposition}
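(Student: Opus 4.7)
The plan is to combine the displayed equations (2.1) and (2.2) with Proposition 1.1 in an essentially bookkeeping argument; no new ingredients are needed. First I would spell out that the map $\varphi \mapsto \lambda\varphi$ is a bijection from $\mathrm{IBr}(B)$ to $\mathrm{IBr}(\lambda B)$, with inverse $\psi \mapsto \lambda^{-1}\psi$ (here using that $\mathrm{LBr}(G)$ is an abelian group, as noted just before Proposition 1.1). Proposition 1.1 then promotes this to a bijection on projective covers: $P_\varphi \mapsto V_\lambda \otimes P_\varphi = P_{\lambda\varphi}$, so at the level of projective Brauer characters $\Phi_\varphi \mapsto \lambda \Phi_\varphi = \Phi_{\lambda\varphi}$, as required by the statement.

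Next I would invoke (2.1) and (2.2) to verify that this pair of maps is inner-product-preserving on both $\mathrm{IBr}(B)$ and $\mathrm{PIm}(B)$. The cross pairing $(\Phi_\varphi, \phi)_{G^0}$ also transports correctly: since $|\lambda(x)|=1$ for every $p$-regular $x$, we have
\[
(\Phi_{\lambda\varphi}, \lambda\phi)_{G^0} = (\lambda \Phi_\varphi, \lambda\phi)_{G^0} = (\Phi_\varphi, \phi)_{G^0},
\]
by the same one-line calculation that gave (2.2).

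For the Cartan matrix statement, I would recall that the $(\varphi,\phi)$-entry of the Cartan matrix of $B$ is the multiplicity of $V_\phi$ as a composition factor of $P_\varphi$, which equals $(\Phi_\varphi, \phi)_{G^0}$. Indexing the Cartan matrix of $\lambda B$ by the bijected characters $\lambda\varphi, \lambda\phi$, the preceding display immediately shows the two matrices have identical entries.

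The only real obstacle is notational: one must be careful that the statement "bijection from $B$ to $\lambda B$" is interpreted as a pair of compatible bijections on $\mathrm{IBr}$ and $\mathrm{PIm}$, and that $\lambda B \in \mathrm{Bl}(G)$ really is a single block — a point already justified in the paragraph preceding the proposition by the observation that (2.1)–(2.2) characterize block membership via linking through the Cartan form.
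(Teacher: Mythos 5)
Your proposal is correct and follows essentially the same route as the paper: the paper justifies Proposition 2.1 by the displayed identities (2.1)--(2.2) together with Proposition 1.1, exactly as you do, and your extra remark that the cross pairing $(\Phi_{\lambda\varphi},\lambda\phi)_{G^{0}}=(\Phi_{\varphi},\phi)_{G^{0}}$ gives the Cartan matrix claim is just an explicit spelling-out of what the paper leaves implicit. No substantive difference.
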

 Let $B_{0}$ denote the principal block of $G$, namely the unique one which contains trivial character $1.$

 \begin{theorem}Let $B$ be a block of $G$ containing linear character $\lambda \in {\rm LBr}(G)$.
 Then $B$ is isomorphic to $B_{0}$ in the means of Proposition 2.1.
 \end{theorem}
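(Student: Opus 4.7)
The plan is to exploit the fact that the construction preceding Proposition 2.1 defines a genuine action of the abelian group $\textrm{LBr}(G)$ on $\textrm{Bl}(G)$, so in particular $\lambda^{-1}\in \textrm{LBr}(G)$ is available. The whole argument reduces to showing that $B$ is a translate of $B_{0}$ under this action, and then quoting Proposition 2.1.

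The key step is to identify $B$ with $\lambda B_{0}$. By hypothesis $\lambda \in \textrm{IBr}(B)$. The defining equality $\textrm{IBr}(\lambda^{-1}B)=\{\lambda^{-1}\varphi : \varphi\in \textrm{IBr}(B)\}$ then gives $1_{G}=\lambda^{-1}\cdot\lambda \in \textrm{IBr}(\lambda^{-1}B)$. Since irreducible Brauer characters lie in a unique block, and since $1_{G}\in \textrm{IBr}(B_{0})$ by definition of the principal block, this forces $\lambda^{-1}B = B_{0}$, i.e.\ $B=\lambda B_{0}$.

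Finally, I would apply Proposition 2.1 with the linear character $\lambda$ and the block $B_{0}$. This provides the inner-product-preserving bijection
\[
\varphi \longmapsto \lambda\varphi,\qquad \Phi_{\varphi}\longmapsto \Phi_{\lambda\varphi},
\]
from $B_{0}$ onto $\lambda B_{0}=B$, which is exactly the isomorphism in the sense of Proposition 2.1.

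I do not foresee any real obstacle: the proof is a formal consequence of the group-action structure on $\textrm{Bl}(G)$ together with the uniqueness of the principal block. The only points worth verifying carefully are that the assignment $B\mapsto \lambda B$ really is a group action (so that $\lambda^{-1}B$ makes sense and equals $B_{0}$), and that the bijection in Proposition 2.1 is surjective onto all of $\textrm{IBr}(B)$, both of which are immediate from (2.1) and (2.2).
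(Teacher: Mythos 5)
Your proposal is correct and follows essentially the same route as the paper: the paper also identifies $B=\lambda B_{0}$ (since $\lambda=\lambda\cdot 1_{G}$ lies in both $\textrm{IBr}(B)$ and $\textrm{IBr}(\lambda B_{0})$, and a Brauer character belongs to a unique block) and then invokes Proposition 2.1. Your version merely spells out, via the inverse $\lambda^{-1}$ and the uniqueness of the block containing $1_{G}$, the one-line identification that the paper states without elaboration.
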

\begin{proof} Since $\lambda \in {\rm LBr}(B)$, we have $B=\lambda B_{0}$. By Proposition 2.1,
the result follows.
\end{proof}

A wellknown fact is that all of irreducible Brauer characters are linear if and only if $G$ has a normal $p$-subgroup $P$ and $G/P$ is abelian.
Now if $G$ has a block containing only linear irreducible Brauer characters, how can we say about the group $G$?

Suppose $B \in \textrm{Bl}(G)$ and $\textrm{IBr}(B) \subseteq \textrm{LBr}(G)$.

By Theorem 2.2, it means the principal block $B_{0}$ having just linear irreducible Brauer characters.

Let $$\textrm{Ker}(B)=\bigcap_{\chi\in \textrm{Irr}(B)}\textrm{Ker}(\chi),
\textrm{Ker}(B^{*})=\bigcap_{\varphi\in \textrm{IBr}(B)}\textrm{Ker}(V_{\varphi}).$$
Then we know $$ \textrm{Ker}(B)=O_{p'}(\textrm{Ker}(\chi)),\chi \in \textrm{Irr}(B)
$$ and $$\textrm{Ker}(B^{*})/\textrm{Ker}(B)=O_{p}(G/\textrm{ker}(B))$$
If $B=B_{0}$ is the principal block, then $$ \textrm{Ker}(B_{0})=O_{p'}(G)$$ and
$$\textrm{Ker}(B_{0}^{*})/O_{p'}(G)=O_{p}(G/O_{p'}(G))$$

Suppose that $B_{0}$ contains only linear Brauer irreducible characters and $l(B_{0})>1$( If $l(B_{0})=1$, then $G$ is $p$-nilpotent). Let $1\neq \lambda \in \textrm{IBr}(B_{0})$. Now let $H=G'O^{p'}(G)$. Since $H\leq \textrm{Ker}(\lambda)$, we can see $\lambda$ as an ordinary irreducible character of $G$ by the following way:
 $$\textrm{IBr}(G/H)=\textrm{Irr}(G/H)\subseteq \textrm{Irr}(G).
 $$
 Then $$O_{p'}(\textrm{Ker}(\lambda))=O_{p'}(G).$$
 In another word, we may consider $\textrm{IBr}(B_{0})\subseteq \textrm{Irr}(B_{0})$.

Then
$$H\leq \textrm{Ker}(B^{*}_{0})=\bigcap_{\lambda\in \textrm{IBr}(B_{0})}\textrm{Ker}(V_{\lambda})=\bigcap_{\lambda\in \textrm{IBr}(B_{0})}\textrm{Ker}(\lambda).$$

Now by using discussion above, we have the following result:

\begin{theorem}Let $H=G'O^{p'}(G)$. Let $G$ has a block $B$ such that ${\rm IBr}(B)\subseteq {\rm LBr}(G)$. Then ${\rm Ker}(B_{0}^{*})=HO_{p'}(G)$.
\end{theorem}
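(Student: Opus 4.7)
The plan is to establish the two inclusions $HO_{p'}(G) \subseteq \mathrm{Ker}(B_0^*)$ and $\mathrm{Ker}(B_0^*) \subseteq HO_{p'}(G)$. First I reduce to the principal block: since $\mathrm{IBr}(B) \subseteq \mathrm{LBr}(G)$ is non-empty, $B$ contains a linear Brauer character $\lambda$, so Theorem 2.2 gives $B = \lambda B_0$ and $\mathrm{IBr}(B_0) = \lambda^{-1}\mathrm{IBr}(B) \subseteq \mathrm{LBr}(G)$. So it is enough to work with the principal block under the same linearity hypothesis.

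For the inclusion $HO_{p'}(G) \subseteq \mathrm{Ker}(B_0^*)$, two facts already in the excerpt suffice. On one hand, $O_{p'}(G) = \mathrm{Ker}(B_0) \subseteq \mathrm{Ker}(B_0^*)$. On the other hand, each $\varphi \in \mathrm{IBr}(B_0)$ is linear, so $V_\varphi$ is one-dimensional over $F$ and the image of $G$ lies in $F^{\times}$, a group of order prime to $p$; hence every $p$-element acts trivially, and combined with the fact that a linear character kills $G'$ this gives $H = G'O^{p'}(G) \subseteq \mathrm{Ker}(V_\varphi)$. Intersecting over $\varphi \in \mathrm{IBr}(B_0)$ yields $H \subseteq \mathrm{Ker}(B_0^*)$, whence $HO_{p'}(G) \subseteq \mathrm{Ker}(B_0^*)$.

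For the reverse inclusion I would use the identity $\mathrm{Ker}(B_0^*)/O_{p'}(G) = O_p(G/O_{p'}(G))$ already recalled in the excerpt. Set $\bar G = G/O_{p'}(G)$ with projection $\pi: G \to \bar G$, and write $\bar H = \pi(H)$. Surjectivity of $\pi$ gives $\pi(G') = \bar G'$, and a short $p/p'$-decomposition argument shows $\pi(O^{p'}(G)) = O^{p'}(\bar G)$, so $\bar H = \bar G'\,O^{p'}(\bar G)$. Then $\bar G/\bar H$ is by construction the largest abelian $p'$-quotient of $\bar G$, so its only $p$-subgroup is trivial; in particular $O_p(\bar G) \subseteq \bar H$. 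Pulling back through $\pi$ yields $\mathrm{Ker}(B_0^*) = \pi^{-1}(O_p(\bar G)) \subseteq \pi^{-1}(\bar H) = HO_{p'}(G)$, which finishes the proof.

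The most substantive step is the last one: one must confirm that $\pi$ carries $O^{p'}(G)$ onto $O^{p'}(\bar G)$ and convert the block-theoretic identity $\mathrm{Ker}(B_0^*)/O_{p'}(G) = O_p(\bar G)$ into the desired containment via the observation that $p$-subgroups of $\bar G$ die in the maximal abelian $p'$-quotient. Everything else is routine assembly of the linearity hypothesis with the already-noted descriptions of $\mathrm{Ker}(B_0)$ and $\mathrm{Ker}(B_0^*)$.
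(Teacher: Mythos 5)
Your proposal is correct and follows essentially the same route as the paper: reduce to the principal block via Theorem 2.2, get $HO_{p'}(G)\subseteq \mathrm{Ker}(B_0^{*})$ from the linearity of the characters in $\mathrm{IBr}(B_0)$, and obtain the reverse inclusion by playing the fact that $\mathrm{Ker}(B_0^{*})/O_{p'}(G)=O_p(G/O_{p'}(G))$ is a $p$-group against the fact that $G/H$ (hence $G/HO_{p'}(G)$) is a $p'$-group. Your phrasing of the last step as ``a $p$-subgroup dies in the maximal abelian $p'$-quotient'' is just a repackaging of the paper's observation that $\mathrm{Ker}(B_0^{*})/HO_{p'}(G)$ is simultaneously a $p$-group and a $p'$-group, and the excursion through $\pi(O^{p'}(G))=O^{p'}(\bar G)$ is harmless but not needed.
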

\begin{proof} By the previous arguments, the case will be changed  to consider the principal block $B_{0}$ satisfying our condition. Since $G/H$ is a $p'$-group, ${\rm Ker}(B_{0}^{*})/HO_{p'}(G)$ must be a $p'$-group. The group ${\rm Ker}(B_{0}^{*})/HO_{p'}(G)$ is in fact a $p-$group since ${\rm Ker}(B_{0}^{*})/O_{p'}(G)$ is a $p-$group. Hence
${\rm Ker}(B_{0}^{*})/HO_{p'}(G)=1.$
\end{proof}

The following is our main result in this paper.

\begin{theorem}Let $G$ be a finite group and $p$ be a prime number. Let $H=G'O^{p'}(G)$. Then
\begin{enumerate}
\item $G$ has a $p$-block $B$ such that ${\rm Irr}(B) \subseteq {\rm Lrr}(G)$ if and only if $G$ is a $p$-nilpotent group with an abelian Sylow $p$-group;

\item $G$ has a $p$-block $B$ such that ${\rm IBr}(B)\subseteq {\rm LBr}(G)$ if and only if $O_{p'}(G)\lhd O_{p'p}(G)=HO_{p'}(G)={\rm Ker}(B_{0}^{*})\lhd O_{p'pp'}(G)=G$.

\end{enumerate}
\end{theorem}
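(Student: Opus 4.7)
The plan is to reduce both parts to statements about the principal block $B_0$ and then decode the linearity of characters in terms of the normal $p'$- and $p$-core series of $G$.

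For (1), I propose to argue directly on the given block $B$ without first invoking Theorem 2.2. If every $\chi\in\mathrm{Irr}(B)$ is linear, then $G'\leq\mathrm{Ker}(\chi)$ for each such $\chi$, hence $G'\leq\mathrm{Ker}(B)$. By the identification $\mathrm{Ker}(B)=O_{p'}(\mathrm{Ker}(\chi))$ recorded before the statement, $\mathrm{Ker}(B)$ is a characteristic $p'$-subgroup of the normal subgroup $\mathrm{Ker}(\chi)$, hence a normal $p'$-subgroup of $G$, so it lies in $O_{p'}(G)$. Consequently $G'\leq O_{p'}(G)$, making $G/O_{p'}(G)$ an abelian group with trivial $p'$-core, which forces it to be an abelian $p$-group; Schur--Zassenhaus then yields a complement $P$, giving the $p$-nilpotent decomposition $G=O_{p'}(G)\rtimes P$ with $P$ abelian. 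The converse is straightforward: for such $G$, $\mathrm{Irr}(B_0)$ is the inflation of $\mathrm{Irr}(G/O_{p'}(G))=\mathrm{Irr}(P)$, and every character of an abelian group is linear.

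For the forward direction of (2), I would first apply Theorem 2.2 to reduce to $B=B_0$, so $\mathrm{IBr}(B_0)\subseteq\mathrm{LBr}(G)$. Theorem 2.3 gives $\mathrm{Ker}(B_0^*)=HO_{p'}(G)$, and combining this with the identity $\mathrm{Ker}(B_0^*)/O_{p'}(G)=O_p(G/O_{p'}(G))=O_{p'p}(G)/O_{p'}(G)$ yields the middle equalities of the chain. To get $O_{p'pp'}(G)=G$, it is enough to show $G/O_{p'p}(G)$ is a $p'$-group; but $H\supseteq O^{p'}(G)$ and $G/O^{p'}(G)$ is a $p'$-group by definition, so $G/H$ is a $p'$-group, and so is its further quotient $G/HO_{p'}(G)=G/O_{p'p}(G)$.

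The converse of (2) is a short factorization argument. Assuming the full chain, every $\lambda\in\mathrm{IBr}(B_0)$ affords a simple module $V_\lambda$ with $\mathrm{Ker}(B_0^*)\leq\mathrm{Ker}(V_\lambda)$, so $V_\lambda$ descends to $G/\mathrm{Ker}(B_0^*)=G/HO_{p'}(G)$; by hypothesis this quotient is a $p'$-group, and since $H\supseteq G'$ it is also abelian, so all its simple $F$-modules are one-dimensional and $\lambda$ is linear. The step requiring the most care is the equality $HO_{p'}(G)=O_{p'p}(G)$ in the forward direction, where one must combine Theorem 2.3 with the paper's formula for $\mathrm{Ker}(B_0^*)/O_{p'}(G)$; the rest is bookkeeping.
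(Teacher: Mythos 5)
Your proposal is correct. For part (2) you follow essentially the paper's route: Theorem 2.2 reduces to the principal block, Theorem 2.3 combined with the recorded identity ${\rm Ker}(B_{0}^{*})/O_{p'}(G)=O_{p}(G/O_{p'}(G))$ gives the middle equalities of the chain, and the observation that $G/H$ (hence $G/HO_{p'}(G)$) is a $p'$-group gives $O_{p'pp'}(G)=G$; for the converse you descend $V_{\lambda}$ to the abelian quotient $G/HO_{p'}(G)$ where the paper descends to $G/H$, an immaterial difference. Where you genuinely diverge is the forward direction of part (1). The paper first deduces ${\rm IBr}(B)\subseteq{\rm LBr}(G)$ and $l(B)=1$ from the decomposition matrix, transfers these properties to $B_{0}$ via Theorem 2.2, and then invokes the nontrivial criterion that $l(B_{0})=1$ forces $G$ to be $p$-nilpotent, finishing with $G'\leq{\rm Ker}(B_{0})=O_{p'}(G)$ to get the abelian Sylow $p$-subgroup. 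You instead observe that linearity of every $\chi\in{\rm Irr}(B)$ gives $G'\leq{\rm Ker}(B)=O_{p'}({\rm Ker}(\chi))$, a normal $p'$-subgroup of $G$, hence $G'\leq O_{p'}(G)$ and $G/O_{p'}(G)$ is an abelian group with trivial $p'$-core, i.e.\ an abelian $p$-group; this delivers $p$-nilpotency and abelianness of the Sylow $p$-subgroup in one stroke, for an arbitrary block $B$, without Theorem 2.2 and without the $l(B_{0})=1\Rightarrow p$-nilpotent theorem. Your version is more elementary and also sidesteps a point the paper leaves implicit, namely that Theorem 2.2 as stated concerns Brauer characters, so its use to transfer the hypothesis ${\rm Irr}(B)\subseteq{\rm Lrr}(G)$ from $B$ to $B_{0}$ requires an extension to ordinary characters that the paper does not spell out.
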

\begin{proof} (1). If $G$ is $p$-nilpotent group with an abelian Sylow $p$-group, we just check the principal  block satisfying $\textrm{Irr}(B_{0})\subseteq \textrm{Lrr}(G)$. Since $G'\leq O_{p'}(G)=\textrm{Ker}(B_{0})\leq \textrm{Ker} \chi, \chi \in \textrm{Irr}(B_{0})$, we have $\textrm{Irr}(B_{0})\subseteq \textrm{Lrr}(G)$. Suppose a block $B$ of $G$ satisfies $\textrm{Irr}(B)\subseteq \textrm{Lrr}(G)$. Then we have $\textrm{IBr}(B)\subseteq \textrm{LBr}(G)$ and $l(B)=1$ by using the decomposition matrix and Cartan matrix of $B$. By Theorem 2.2, the principal block $B_{0}$ has  the same properties. Hence $G$ is $p$-nilpotent. From $\textrm{Irr}(B_{0}) \subseteq \textrm{Lrr}(G)$, it follows that $G'\leq \bigcap_{\chi \in \textrm{Irr}(B_{0})}\textrm{Ker}(\chi)=O_{p'}(G)$. Hence the Sylow $p$-subgroup must be abelian.

(2) The ``only if" part is from the theorem 2.3. If $HO_{p'}(G)={\rm Ker}(B_{0}^{*})$, then $\textrm{IBr}(B_{0})\subseteq \textrm{IBr}(G/H)=\textrm{Irr}(G/H)=\textrm{Lrr}(G/H)=\textrm{LBr}(G/H)=\textrm{LBr}(G)$. Hence the  ``if" part
 is proved.
\end{proof}

The following result is related to Holm and Willem's local conjecture on Brauer character degrees. Of course, our result could contains much more information of group $G$, as it connects several block invariants in an equality.

\begin{theorem}
If $G$ has a block $B$ satisfying ${\rm IBr}(B)\subseteq {\rm LBr}(G)$, then for any $B\in {\rm Bl}(G)$,
there is $$   \frac{{\rm Dim}_{F}(B)}{\abs{D_{B}}} =\sum_{\varphi\in {\rm IBr}(B)}\varphi(1)^{2}$$where $D_{B}$ denote the defect group of $B$. Hence Holm and Willems' conjecture holds for these groups.
\end{theorem}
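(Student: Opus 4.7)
Plan: I would prove the stronger pointwise identity
$$\Phi_\varphi(1) = |D_B|\,\varphi(1)\qquad\text{for every } \varphi \in \mathrm{IBr}(B)$$
for every block $B\in\mathrm{Bl}(G)$; summing the standard decomposition $\dim_F B = \sum_\varphi \varphi(1)\Phi_\varphi(1)$ then yields the asserted equality.

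I would first treat the principal block $B_0$. By Theorem 2.4(2), $\bar G := G/O_{p'}(G) = P\rtimes K$, where $P=O_p(\bar G)$ is the normal Sylow $p$-subgroup of $\bar G$ and $K\cong G/HO_{p'}(G)$ is abelian of order prime to $p$. Since $P$ lies in the kernel of every simple $F\bar G$-module, $\mathrm{IBr}(B_0)$ is identified with $\mathrm{Irr}(K)$: each $\varphi\in\mathrm{IBr}(B_0)$ is linear, and $|D_{B_0}|=|P|$. Moreover $O_{p'}(\bar G)=1$, so $F\bar G$ has a unique block, and consequently $B_0=F\bar G$ as $F$-algebras (via the central idempotent $|O_{p'}(G)|^{-1}\sum_{n\in O_{p'}(G)}n$). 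Realising $P_\varphi=F\bar G\cdot e_\varphi$ with $e_\varphi$ the primitive idempotent of $FK$ attached to $\varphi$, and using that $F\bar G$ is free of rank $|P|$ as a right $FK$-module (with $P$ as a transversal of $K$), one obtains $\Phi_\varphi(1)=|P|=|D_{B_0}|\varphi(1)$. Theorem 2.2 together with Proposition 2.1 then propagates the identity to every block in the $\mathrm{LBr}(G)$-orbit of $B_0$, since the action preserves defect groups, Brauer character degrees, and Cartan matrices.

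For a block $B$ outside this orbit, $B$ covers a nontrivial $\theta\in\mathrm{Irr}(O_{p'}(G))$. Combining Fong--Reynolds (reducing to the inertia group $T=T_G(\theta)$ and rescaling both sides of the desired equality by $[G:T]^2$) with Dade's theorem on Clifford extensions (eliminating the stable character $\theta$), $B$ is brought into bijection with a block $c$ of the twisted algebra $F^\alpha[T/O_{p'}(G)]$, preserving defect groups, $l$-values, and Cartan matrices, so the identity for $B$ is equivalent to the identity for $c$. Since $T/O_{p'}(G)$ embeds in $\bar G$, it inherits a normal $p$-subgroup with abelian $p'$-complement; the abelianness of $K$ forces the relevant cocycle $\alpha$ to split on the $p'$-part, so the free-module argument of Step 1 adapts to $F^\alpha[T/O_{p'}(G)]$ and yields $\Phi_{\varphi'}(1)=|D_c|\varphi'(1)$ for every $\varphi'\in\mathrm{IBr}(c)$.

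The principal obstacle is this last reduction: one must verify that the Clifford-twisted algebra $F^\alpha[T/O_{p'}(G)]$ decomposes as a free module of the correct rank over its abelian $p'$-part despite the possible presence of a nontrivial $2$-cocycle. This is where the abelianness of $K$, and hence of its restriction to every inertia subgroup appearing in the reduction, is essential.
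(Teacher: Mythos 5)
Your route is genuinely different from the paper's. The paper's entire proof is a reduction to the author's earlier work \cite{zeng}: by Theorem 2.4 the normal subgroup $N=HO_{p'}(G)$ is $p$-nilpotent, so every block $b$ of $N$ has $l(b)=1$, and $G/N$ is a $p'$-group, so $D_B\leq N$; the displayed equality is then quoted from \cite{zeng} (and Remark 2.6 records that it holds for all groups of $p$-length $1$). You instead give a self-contained argument via the pointwise identity $\Phi_\varphi(1)=|D_B|\,\varphi(1)$, treating the principal block directly and all other blocks through the Fong--Reynolds and Fong/K\"ulshammer--Puig reductions. That is a legitimate and more informative route, in effect reproving the relevant case of \cite{zeng}.

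Two points need repair. First, ``$O_{p'}(\bar G)=1$, so $F\bar G$ has a unique block'' is not a valid inference for arbitrary groups; here it is true because $\bar G$ has a normal Sylow $p$-subgroup, hence is $p$-solvable, so Hall--Higman gives $C_{\bar G}(P)\leq P$, and a block idempotent, being supported on $p$-regular elements of $C_{\bar G}(O_p(\bar G))\subseteq P$, must equal $1$. Second, and more seriously, your resolution of your own ``principal obstacle'' is wrong: the abelianness of $K$ does \emph{not} force the Clifford cocycle $\alpha$ to split on the $p'$-part ($H^2$ of an abelian $p'$-group with values in $F^{\times}$ is nontrivial in general, e.g. for $C_2\times C_2$ when $p$ is odd). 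Fortunately no such splitting is needed. The cocycle automatically splits on the normal Sylow $p$-subgroup $P_T$ of $\bar T$ (since $F^{\times}$ is uniquely $p$-divisible in characteristic $p$), $P_T$ acts trivially on every simple $F^{\alpha}[\bar T]$-module, $F^{\alpha}[\bar T]$ is free over $FP_T$, and the quotient $F^{\bar\alpha}[\bar T/P_T]$ is a twisted group algebra of a $p'$-group, hence semisimple by the twisted Maschke theorem; this already yields $\Phi_{\varphi'}(1)=|P_T|\,\varphi'(1)=|D_c|\,\varphi'(1)$ whether or not the simple $F^{\bar\alpha}$-modules are one-dimensional. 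With these two corrections your argument closes.
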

\begin{proof}By Theorem 2.4, in $G$ there is a sequence of normal subgroups:
$$ O_{p'}(G)\unlhd O_{p'p}(G)=HO_{p'}(G)\unlhd O_{p'pp'}(G)=G$$where $H=G'O^{p'}(G)$. Notice the group $HO_{p'}(G)$ is $p$-nilpotent, hence $l(b)=1$ for any block $b\in \textrm{Bl}(HO_{p'}(G))$. Since $G/HO_{p'}(G)$ is a $p'$-group, we have $D_{B}\leq HO_{p'}(G)$. Our assertion follows from \cite{zeng}.
\end{proof}

\begin{remark} In fact, Theorem 2.5 holds for all finite groups of $p$-length $1$ which are defined in \cite{issm}.
\end{remark}

\bibliographystyle{amsplain}

\end{document}